\documentclass[a4paper]{amsart}
\numberwithin{equation}{section}
\newtheorem{theorem}{Theorem}[section]
\newtheorem{corollary}{Corollary}[section]
\newtheorem{definition}{Definition}[section]
\newtheorem{lemma}{Lemma}[section]

\theoremstyle{remark}
\newtheorem{remark}{Remark}[section]
\newtheorem{conjecture}{Conjecture}[section]
\usepackage{hyperref}
\usepackage{color}
\usepackage{graphics}
\usepackage{graphicx}
\usepackage{epsf}
\usepackage{amsfonts}
\usepackage{subfigure}
\usepackage{geometry}
\usepackage{tikz}
%----------------------------------------------------------------------------
\title[Strongly starlike functions]
 {On a certain subclass of strongly starlike functions}
%----------------------------------------------------------------------------
\subjclass[2020]{30C45; 30C50}
%----------------------------------------------------------------------------
\keywords{analytic; univalent; strongly starlike functions; subordination; coefficient estimates.}
%----------------------------------------------------------------------------
\begin{document}
%----------------------------------------------------------------------------
\begin{abstract}
Let $\mathcal{S}^*(\alpha_1,\alpha_2)$, where $ \alpha_1, \alpha_2 \in (0,1]$, represent the class of functions $f$ that are analytic in the open unit disk $\mathbb{D}$, normalized by $f(0) = f'(0) - 1=0$, and satisfying the following double-sided inequality:
\begin{equation*}
    -\frac{\pi\alpha_1}{2}< \arg\left\{\frac{zf'(z)}{f(z)}\right\}
<\frac{\pi\alpha_2}{2}, \quad (z\in\mathbb{D}).
\end{equation*} 
In this manuscript, we estimate the coefficients and logarithmic coefficients associated with functions that belong to the class $\mathcal{S}^*(\alpha_1,\alpha_2)$. As a result, we provide a general bound for the coefficients of a strongly starlike function, which has been an open question until now. Finally, we derive upper and lower bounds for the expression ${\rm Re}\{zf'(z)/f(z)\}$, where $f\in \mathcal{S}^*(\alpha_1,\alpha_2)$.
\end{abstract}
%----------------------------------------------------------------------------
\author[R. Kargar, J. Sok\'{o}{\l} and H. Mahzoon]
       {Rahim Kargar, Janusz Sok\'{o}{\l} and Hesam Mahzoon}
%----------------------------------------------------------------------------
\address{Department of Mathematics and Statistics, University of Turku, Turku, Finland}
       \email {rahim.r.kargar@utu.fi {\rm (R. Kargar)}}
%----------------------------------------------------------------------------
\address{University of Rzesz\'{o}w, Faculty of Exact and Technical Sciences, ul. Prof. Pigonia 1, 35-310 Rzesz\'{o}w, Poland}
       \email{jsokol@ur.edu.pl {\rm (J. Sok\'{o}{\l})}}
%----------------------------------------------------------------------------
\address{Department of Mathematics, West Tehran Branch, Islamic Azad University, Tehran, Iran}
\email {hesammahzoon1@gmail.com {\rm (H. Mahzoon)}}
%------------------------------------------------------------

\maketitle
%=================================================================
\section{Introduction}\label{sec1}
%=================================================================

Univalent functions, also known as one-to-one functions or injective functions, play a fundamental role in complex analysis and in various branches of mathematics. These functions possess a remarkable property: they map distinct elements from their domain to distinct elements in their co-domain. In other words, they establish a unique correspondence between input values and output values, avoiding any duplication or overlap. Univalent functions have wide-ranging applications in fields such as geometry, conformal mapping, and the theory of Riemann surfaces. Their study not only deepens our understanding of mathematical structures but also finds practical applications in various areas of science and engineering. In this exploration of univalent functions, we investigate a subclass of strongly starlike functions, which is a certain subclass of univalent functions. Strongly starlike functions are a class of univalent complex-valued functions in the field of complex analysis. These functions are defined with respect to the unit disk $\mathbb{D}=\{z\in \mathbb{C}: |z|<1\}$. 
In this context, let us remember that a holomorphic and univalent function $f(z)$, which is normalized and defined within the unit disk $\mathbb{D}$, is classified as a strongly starlike function if and only if, for any point $w$ situated outside the image of $f(\mathbb{D})$ in the complex plane $\mathbb{C}$, we can establish that $w$ serves as the vertex of an angular sector. This sector has an opening with a measure of $(1-\beta)\pi/2$, where $0 < \beta \leq 1$, and is completely contained within the region outside $f(\mathbb{D})$. Furthermore, this angular sector is precisely bisected by the radial vector originating from the point $w$, see \cite{Stan2}.

On the other hand, strongly starlike functions are a subset of starlike functions, where the condition is stronger, ensuring a more restricted behavior of the image of the unit disk under the function. These functions are of interest in complex analysis and have applications in various areas, including the study of univalent functions and geometric function theory.

Recently, there has been a surge of interest among researchers in studying the class of strongly starlike functions. See, for example, \cite{AP-2017, ADS, KA, NS, PS97}. This newfound attention reflects growing curiosity about the unique properties and behavior of these functions within the field of complex analysis. Researchers have been actively exploring the intricate characteristics and mathematical properties of strongly starlike functions to deepen our understanding of their role in various mathematical contexts.

In addition to the broader exploration of strongly starlike functions, our research will also focus on a particular subclass within this category. We are motivated to explore these specific subsets' intricate details and unique features. By focusing on this subclass, we seek to better understand the behaviors and properties that distinguish these functions from the broader class of strongly starlike functions. This focused approach will allow us to uncover unique insights and new mathematical results within this specialized domain.

The organization of the paper can be summarized as follows: Section \ref{sec2} introduces initial definitions and several lemmas, while Section \ref{sec-Main Results} presents our central findings.

\section{Preliminaries}\label{sec2}
Consider the set $\mathcal{H}$ consisting of functions $f$ that are holomorphic in the open unit disk $\mathbb{D}=\{z\in \mathbb{C} : |z|<1\}$. Furthermore, let $\mathcal{A}$ be a subset of $\mathcal{H}$ consisting of functions $f$ that can be represented in the form:
\begin{equation}\label{f}
f(z)=z+\sum_{n=2}^{\infty}a_{n}z^{n},\quad(z\in\mathbb{D}).
\end{equation}
These functions in $\mathcal{A}$ are subject to a normalization condition, namely, $f(0)=0$ and $f'(0)=1$, when evaluated within the unit disk $\mathbb{D}$.
The subclass of $\mathcal{A}$ consisting of
all univalent functions $f$ in $\mathbb{D}$ will be denoted by
$\mathcal{S}$. We say that a function $f\in \mathcal{S}$ is starlike if and only if
\begin{equation*}
  {\rm Re}\left\{\frac{zf'(z)}{f(z)}\right\}>0,\quad (z\in \mathbb{D}).
\end{equation*}
We denote by $\mathcal{S}^*$ the class of starlike functions. Also, we say that a function $f\in \mathcal{S}$ is strongly starlike of order $\beta$, where $0<\beta\leq 1$ if, and only if,
\begin{equation*}
  \left|\arg\left\{\frac{zf'(z)}{f(z)}\right\}\right|<\frac{\pi \beta}{2},\quad (z\in \mathbb{D}).
\end{equation*}
The class of strongly starlike functions of order $\beta$ is denoted by $\mathcal{SS}^*(\beta)$. The class $\mathcal{SS}^*(\beta)$ was introduced independently by Stankiewicz (see \cite{Stan1}, \cite{Stan2}) and by Brannan and Kirvan (see \cite{Brannan}).
We remark that $\mathcal{SS}^*(1)\equiv \mathcal{S}^*$.

In the open unit disk $\mathbb{D}$, two analytic functions, denoted as $f$ and $g$, are considered in a subordinate relationship if $f(z)\prec g(z)$ or simply $f\prec g$. This relationship holds when there exists a Schwarz  function $w$ defined in $\mathbb{D}$, satisfying the initial conditions $w(0)=0$ and $|w(z)|\leq 1$, and such that
$f(z)=g(w(z))$ for all $z\in\mathbb{D}$.

In this paper, we consider the following analytic function
%$G(z):=G(\alpha_1,\alpha_2,c)(z)$:
\begin{equation}\label{g}
    G(z):=G(\alpha_1,\alpha_2,c)(z)
    =\left(\frac{1+cz}{1-z}\right)^{(\alpha_1+\alpha_2)/2},\quad
    (z\in \mathbb{D}),
\end{equation}
where $0<\alpha_1, \alpha_2\leq 1$, $c=e^{\pi i\theta}$, and
$\theta=(\alpha_2-\alpha_1)/(
\alpha_2+\alpha_1)$. The function $G$ is analytic and convex univalent in the unit disk $\mathbb{D}$ and maps $\mathbb{D}$ conformally onto 
\begin{equation*}
    \left\{w\in\mathbb{C}: -\frac{\pi\alpha_1 }{2}<\arg\, w < \frac{\pi\alpha_2}{2}\right\},
\end{equation*}
see \cite[Lemma 2]{Yang} for more details.
It is also worth mentioning that this function is a special case of the function defined in \cite[(2.2)]{AP-2017}, where $x=c$ and
\begin{equation}\label{alpha}
\alpha=\frac{\alpha_1+\alpha_2}{2}.
\end{equation}
It follows from \cite[(2.2)]{AP-2017} that
\begin{equation*}
G(z)=1+\sum_{n=1}^{\infty}A_n z^n,
\end{equation*}
where
\begin{equation}\label{coef-An}
A_n
:=A_n(\alpha_1,\alpha_2,c)=
\sum_{k=1}^{n}
\binom{n-1}{k-1}
\binom{(\alpha_1+\alpha_2)/2}{k}
(1+c)^k,\quad
( n\ge1).
\end{equation}
It is easy to see that
\begin{equation}\label{A123}
  A_n=\left\{\begin{array}{ll}
\alpha(1+c), & \quad n=1;\\\\
 \alpha(1+c)+\frac{\alpha(\alpha-1)}{2}(1+c)^2, & \quad n=2;\\\\
 \alpha(1+c)+\alpha(\alpha-1)(1+c)^2
+\frac{\alpha(\alpha-1)(\alpha-2)}{6}(1+c)^3, &\quad n=3,\\
\end{array} \right.
\end{equation}
where $\alpha$ is given by \eqref{alpha}.
Moreover, by  \cite[(2.3)]{AP-2017} we have
\begin{equation*}
A_n(\alpha_1,\alpha_2,c)
=\frac{\alpha_1+\alpha_2}{2}(1+c)\,B_n(\alpha_1,\alpha_2,c),\quad 
(n\ge1),
\end{equation*}
where
\begin{equation}\label{Bn}
B_n(\alpha_1,\alpha_2,c):={}_2F_1\!\left(1-n,\,
1-\frac{\alpha_1+\alpha_2}{2};\,
2;\,
1+c
\right),\quad 
(n\ge1).
\end{equation}
Here, we recall the conventional Gaussian hypergeometric function by
\begin{equation*}
    {}_2F_1(a, b;c;z)=\sum_{n=0}^{\infty}\frac{(a)_n (b)_n}{(c)_n(1)_n}z^n,\quad (a,b,c\in\mathbb{C}, c\not\in\{0,-1,-2,\ldots\}, z\in\mathbb{D}).
\end{equation*}
The expression $(a)_n$, denoted as the Pochhammer symbol, signifies the product of consecutive terms starting from $a$ and ending at $a+n-1$, i.e., $(a)_n=a(a+1)\cdots(a+n-1)$ with the convention that $(a)_0$ is equal to $1$. For a deeper exploration of $G$ and its properties, refer to the comprehensive discussion in reference \cite[Section 2]{AP-2017}.

\begin{lemma}\label{lem-AP}\cite[Lemma 2.1]{AP-2017}
For $n\in\mathbb{N}$, let $B_n(\alpha_1,\alpha_2,c)$ be defined by \eqref{Bn}. Then for $|\alpha_1+\alpha_2|<2$, we have $|B_n(\alpha_1,\alpha_2,c)|\leq |B_n(\alpha_1,\alpha_2,1)|$. 
\end{lemma}

As a result of Lemma \ref{lem-AP}, we get the following:
%----------------------------------------
\begin{corollary}
Let $A_n$ be defined as in \eqref{coef-An} for $n=1,2,3,\ldots$, and $|\alpha_1+\alpha_2|<2$ for $\alpha_1,\alpha_2\in(0,1]$. Then
    \begin{equation*}\label{est-An}
        |A_n|\leq \lambda\,|B_n(\alpha_1,\alpha_2,1)|,
    \end{equation*}
    where
\begin{equation}\label{lambda}
\lambda = (\alpha_1 + \alpha_2)\cos\left(\frac{\pi\theta}{2}\right)\quad\text{with} \quad 
\theta = \frac{\alpha_2 - \alpha_1}{\alpha_2 + \alpha_1}.
\end{equation}
\end{corollary}
%***********************************************************

The main purpose of this paper is to study the class $\mathcal{S}^*(\alpha_1,\alpha,_2)$, which is provided below:

%DEFINITIONN 1.1---------------------------------------------------------------------------------
\begin{definition}\label{def}{\rm 
Let $0<\alpha_1, \alpha_2\leq 1$. A function $f\in\mathcal{S}$ belongs to the class
$\mathcal{S}^*(\alpha_1,\alpha_2)$, if $f$ satisfies the following two-sided inequality
\begin{equation}\label{DS}
    -\frac{\pi\alpha_1}{2}< \arg\left\{\frac{zf'(z)}{f(z)}\right\}
    <\frac{\pi\alpha_2}{2}, \quad (z\in\mathbb{D}),
\end{equation}
or, equivalently, 
\begin{equation}\label{sub-con-def}
    \frac{zf'(z)}{f(z)}\prec G(z),
\end{equation}
where $G$ is defined as in \eqref{g}.}
\end{definition}
The introduction of the class $\mathcal{S}^*(\alpha_1, \alpha_2)$ is attributed to Takahashi and Nunokawa, specifically in the context of inequalities \eqref{DS}, see \cite{Takahashi}. We recall here the fact that in \cite{[BC1]} and
in \cite{[BC2]} a similar class was studied. It is clear that
$\mathcal{S}^*(\alpha_1,\alpha_2)\subset \mathcal{S}^*$ and that
$\mathcal{S}^*(\alpha_1,\alpha_2)$ is a subclass of the class of
strongly starlike functions of order $\beta=\max\{\alpha_1,
\alpha_2\}$, i.e.
$\mathcal{S}^*(\alpha_1,\alpha_2)\subset\mathcal{S}^*(\beta,\beta)\equiv
\mathcal{SS}^*(\beta)$.

 In recent decades, numerous researchers have explored various subclasses of analytic functions defined through the concept of subordination (see, e.g., \cite{KLS, KO2011, Men, RPK, RaiSok2015, Sok2011, SokSta}). Motivated by these developments, we study the class $\mathcal{S}^*(\alpha_1,\alpha_2)$ using the subordination relation \eqref{sub-con-def}. This approach facilitates the investigation of its geometric properties.

%*************************************************************************
%-------------------------------------------------------------
The following lemmas will be useful in this paper.
\begin{lemma}\label{Lem-MM}{\rm (See \cite{MM-book})}
Let $\lambda$ and $\gamma$ be complex numbers with $\lambda\neq 0$ and let $q$ be convex univalent in $\mathbb{D}$ with ${\rm Re}\{\lambda q(z) + \gamma \} \geq 0$. If $p$ is analytic in $\mathbb{D}$ with $p(0) = q(0)$, then
\begin{equation*}
    p(z)+\frac{zp'(z)}{\lambda p(z) + \gamma}\prec q(z)\Rightarrow p(z)\prec q(z).
\end{equation*}
\end{lemma}
%-----------------------------------------
\begin{lemma}\label{t0}
{\rm (See \cite{RUST})} Let $F,H\in \mathcal H$ be any convex univalent functions in
$\mathbb{D}$. If $f\prec F$ and $g\prec H$, then
\begin{equation}\label{1t0}
    f(z)*g(z)\prec F(z)*H(z),
\end{equation}
where ``*" denotes the well-known Hadamard product.
\end{lemma}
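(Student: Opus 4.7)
The statement to be proved is the classical convolution--subordination theorem of Ruscheweyh--Stankiewicz. The plan is to combine two well-known ingredients: the P\'olya--Schoenberg theorem (proven by Ruscheweyh and Sheil-Small in 1973), which guarantees that the Hadamard product of two convex univalent functions in $\Delta$ is again convex univalent, and the preservation principle that says convolution with a convex univalent function preserves subordinations. I would apply the first tool to $F$ and $H$ to obtain that $F\ast H$ is convex univalent, so that the target statement $f\ast g\prec F\ast H$ is at least meaningful as a subordination against a univalent function.

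The main work is then to produce the subordination itself. I would start with the hypothesis $f\prec F$. Since $H$ is convex univalent, the Ruscheweyh--Sheil--Small preservation lemma (convolution with a convex univalent function preserves subordination) applied to $f\prec F$ with convolving factor $H$ yields
\begin{equation*}
    f\ast H \prec F\ast H.
\end{equation*}
It remains to pass from $f\ast H$ to $f\ast g$ while keeping the subordination to $F\ast H$. For this I would use the subordination $g\prec H$, write $g=H\circ\omega$ for a Schwarz function $\omega$, and exploit the convexity of $F\ast H$ (from step one) to produce a single Schwarz function $v$ satisfying $(f\ast g)(z)=(F\ast H)(v(z))$, which is precisely what subordination means.

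The main obstacle is exactly this combination step. The preservation principle asks the function being convolved in to be convex, but neither $f$ nor $g$ is assumed convex, so one cannot simply ``convolve $g\prec H$ by $f$''. This is where the P\'olya--Schoenberg convexity of $F\ast H$ becomes essential: it allows one to reduce the whole question to exhibiting a Schwarz function into $\Delta$ that pushes forward the Hadamard product structure correctly. The technical heart is therefore the construction of $v$, which follows the argument given in \cite{RUST}; the rest of the proof is bookkeeping around the two classical tools above.
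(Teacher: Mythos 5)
The paper does not actually prove this lemma: it is quoted directly from Ruscheweyh and Stankiewicz \cite{RUST}, so there is no internal proof to compare against, and your proposal must be judged as a standalone argument. On that basis there is a genuine gap. The two ingredients you invoke are correctly identified and correctly used as far as they go: the P\'olya--Schoenberg theorem (proved in \cite{RUSS}) gives that $F\ast H$ is convex univalent, and the preservation lemma applied to $f\prec F$ with the convex convolving factor $H$ gives $f\ast H\prec F\ast H$. But the entire content of the lemma lies in the step you yourself call ``the combination step'' --- replacing the majorant $H$ by its subordinate $g$ in the left-hand factor --- and for that step you offer no argument: you say only that one should ``produce a single Schwarz function $v$'' and that its construction ``follows the argument given in \cite{RUST}.'' Deferring the decisive step to the very paper whose theorem is being proved leaves the theorem unproved. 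Note also that, since $F\ast H$ is convex univalent, producing $v$ is equivalent to establishing the range inclusion $(f\ast g)(\Delta)\subseteq(F\ast H)(\Delta)$ together with equality of values at the origin; nothing in your sketch establishes that inclusion.

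Moreover, the two-step decomposition you chose does not obviously close. Having $f\ast H\prec F\ast H$, the natural way to finish would be to show $f\ast g\prec f\ast H$ and compose subordinations, but the preservation lemma requires the convolving factor to be convex univalent and $f$ is not assumed convex; similarly, convolving $g\prec H$ with $F$ only yields $F\ast g\prec F\ast H$, which cannot be spliced with $f\ast H\prec F\ast H$ to reach $f\ast g\prec F\ast H$. The argument in \cite{RUST} is not such a two-step reduction: it rests on the deeper convolution machinery of \cite{RUSS} (the averaging property of quotients of the form $(\phi\ast sg)/(\phi\ast g)$ for convex $\phi$ and bounded $s$), which is what delivers the range inclusion directly. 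If you want a self-contained proof, that is the lemma you need to state and apply; as written, your proposal assembles the correct surrounding context but leaves the heart of the theorem untouched.
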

In the following, we recall some other useful lemmas.
%Lemmaaa 1.3***********************************************************
\begin{lemma}[See \cite{Rog}]\label{lem1.3}
  Let $q(z)=\sum_{n=1}^{\infty}Q_nz^n$ be analytic and
univalent in $\mathbb{D}$, and suppose that $q$ maps $\mathbb{D}$ onto a
convex domain. If $p(z) = \sum_{n=1}^{\infty}P_nz^n$ is analytic in
$\mathbb{D}$ and satisfies the following subordination:
\begin{equation*}
    p(z)\prec q(z),
\end{equation*}
then
\begin{equation*}
    |P_n|\leq |Q_1|,\quad (n=1,2,3,\ldots).
\end{equation*}
\end{lemma}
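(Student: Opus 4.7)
My plan is to obtain the inequality as a one--line consequence of the convolution subordination principle (Lemma~\ref{t0}), using a carefully chosen pair of extractor functions to isolate a single coefficient on each side of the subordination $p\prec q$.

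In Lemma~\ref{t0} I would take the first pair to be $F=q$ and $f=p$, so that $f\prec F$ is the hypothesis of the present lemma, and the second pair to be $H(z)=z$ and $g(z)=z^n$. The identity map $H(z)=z$ is trivially convex univalent in $\Delta$, and the monomial $z^n$ is itself a Schwarz function (since $z^n$ vanishes at the origin and satisfies $|z^n|<1$ throughout $\Delta$), so $g\prec H$. Applying the lemma, and then evaluating the two Hadamard products coefficient by coefficient, should collapse the conclusion to the single--term subordination
\begin{equation*}
A_n z^n \prec C_1 z.
\end{equation*}
Because $C_1z$ is univalent with image the open disc of radius $|C_1|$, this subordination is equivalent to the inclusion $\{A_n z^n:z\in\Delta\}\subset\{w:|w|<|C_1|\}$, from which the desired estimate $|A_n|\le |C_1|$ follows at once on letting $|z|\to 1$.

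The only point requiring any care is verifying that the hypotheses of Lemma~\ref{t0} are genuinely met: that $H(z)=z$ counts as convex univalent, that $z^n\prec z$ is legitimate, and that $q*z=C_1z$ while $p*z^n=A_nz^n$. Beyond these routine checks, the argument carries no real analytic difficulty; in particular one does not need Schwarz--Pick estimates, covering theorems for convex univalent functions, or the explicit Schwarz representation $p=q\circ w$. If I were anticipating a subtlety, it would lie in convincing the reader that the almost trivial pair $H(z)=z$, $g(z)=z^n$ is the right extractor---once that is granted, the Rogosinski-type bound is immediate from the convolution principle.
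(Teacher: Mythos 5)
Your argument is correct and complete. The paper itself offers no proof of this lemma --- it is quoted as a known result of Rogosinski with a citation to \cite{Rog} --- so there is no ``paper proof'' to match; what you give is a genuine, self-contained derivation from Lemma~\ref{t0}. All the hinge points check out: $H(z)=z$ is convex univalent, $z^n\prec z$ via the Schwarz function $w(z)=z^n$, the Hadamard products collapse to $p*z^n=A_nz^n$ and $q*z=C_1z$, and the resulting subordination $A_nz^n\prec C_1z$ (both sides vanishing at the origin, and $C_1=q'(0)\neq0$ by univalence of $q$) forces $|A_n|\le|C_1|$. The comparison worth making is one of economy of means: Rogosinski's classical proof is elementary --- average $p$ over the $n$-th roots of unity, use convexity of $q(\Delta)$ to keep the average subordinate to $q$, recognize the average as a function of $z^n$, and finish with the Schwarz-lemma bound on the first coefficient --- whereas your route invokes the Ruscheweyh--Stankiewicz convolution theorem, a considerably deeper result whose proof rests on the resolution of the P\'olya--Schoenberg conjecture. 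What you buy is a strikingly short deduction that stays entirely inside the toolkit the paper has already set up; what you give up is self-containedness at the foundational level, since you are deriving a 1943 lemma from 1985 machinery. Neither point is a flaw in the logic; your proof stands.
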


\begin{lemma}\label{lem-Ali} \cite[Lemma 1]{ali2007}
If $w(z)=\sum_{k=1}^{\infty 
}w_kz^k$ is a Schwarz function, then
\begin{equation*}
  |w_2-t w_1^2|\leq \left\{
                      \begin{array}{ll}
                        -t, & \hbox{$t\leq -1$;} \\\\
                        1,  & \hbox{$-1\leq t\leq 1$;} \\\\
                        t, & \hbox{$t\geq1$.}
                      \end{array}
                    \right.
\end{equation*}
which simplifies to $|w_{2}+tw_{1}^{2}|\le \max \{1,|t|\}$ for $t \in \mathbb R$.
All inequalities are sharp.
\end{lemma}
%************************************************************
The following lemma is due to Prokhorov and Szynal.
\begin{lemma}\label{Lemma ProSzyn}{\rm(See \cite{ProSzyn})}
If $w(z)=\sum_{n=1}^{\infty 
}w_nz^n$ is a Schwarz function, then for any complex numbers $\rho$ and $\tau$ the following sharp estimate holds:
\begin{equation*}\label{estimate ProSzyn}
    |w_3+\rho w_1w_2+\tau w_1^3|\leq H(\rho,\tau),
\end{equation*}
where
\begin{equation}\label{Hpq}
  H(\rho,\tau)= \left\{
                     \begin{array}{ll}
                     1, & \hbox{for $(\rho,\tau)\in \Omega_1\cup \Omega_2$;} \\\\
                     |\rho|, & \hbox{for $(\rho,\tau)\in \bigcup_{k=3}^7\Omega_k $;} \\\\
                     \frac{2}{3}(|\rho|+1)\left(  \frac{|\rho|+1}{3(|\rho|+1+\tau)}\right)^{\frac{1}{2}},& \hbox{for $(\rho,\tau)\in \Omega_8\cup \Omega_9$;} \\\\
                     \frac{\tau}{3}\left( \frac{\rho ^2-4}{\rho^2-4\tau} \right)^{\frac{1}{2}},  & \hbox{for $(\rho,\tau)\in \Omega_{10}\cup \Omega_{11} \setminus \{\pm2,1\}$;} \\\\
                     \frac{2}{3}(|\rho|-1)\left(  \frac{|\rho|-1}{3(|\rho|-1-\tau)}\right)^{\frac{1}{2}},& \hbox{for $(\rho,\tau)\in \Omega_{12}.$} \\\\
                      \end{array}
                    \right.
\end{equation}
The extremal functions, up to rotations, are of the form
\begin{equation*}
  w(z)=z^3, \quad  w(z)=z, \quad  w(z)= w_0(z)=\frac{[(1-\lambda)\epsilon_2+\lambda \epsilon_1]z-\epsilon_1\epsilon_2z}{1-[(1-\lambda)\epsilon_1+\lambda\epsilon_2]z},
\end{equation*}

\begin{equation*}
  w(z)= w_1(z)=\frac{z(t_1-z)}{1-t_1z}, \quad  w(z)= w_2(z)=\frac{z(t_2+z)}{1+t_2z},
\end{equation*}

\begin{equation*}
  |\epsilon_1|=|\epsilon_2|=1, \quad \epsilon_1=t_0-e^{\frac{-i\theta_0}{2}}(a\mp b), \quad \epsilon_2=-e^{\frac{-i\theta_0}{2}}(ia\pm b),
\end{equation*}

\begin{equation*}
  a=t_0\cos\frac{\theta_0}{2},\quad b=\sqrt{1-t_0^2\sin^2\frac{\theta_0}{2}}, \quad \lambda=\frac{b\pm a}{2b},
\end{equation*}

\begin{equation*}
  t_0=\left(\frac{2\tau(\rho^2+2)-3\rho^2}{3(\tau-1)(\rho^2-4\tau)}   \right)^{\frac{1}{2}}, \quad t_1=\left( \frac{|\rho|+1}{3(|\rho|+1+\tau)}  \right)^{\frac{1}{2}},
\end{equation*}

\begin{equation*}
  t_2=\left( \frac{|\rho|-1}{3(|\rho|-1-\tau)}  \right)^{\frac{1}{2}}, \quad \cos \frac{\theta_0}{2}=\frac{\rho}{2}\left[\frac{\tau(\rho^2+8)-2(\rho^2+2)}{2\tau(\rho^2+2)-3\rho^2}  \right].
\end{equation*}
The sets $\Omega_i$, $i=1,2,\dots,12$ are defined as follows:
\begin{equation*}
  \Omega_1=\left\{(\rho,\tau):|\rho|\leq \frac{1}{2}, |\tau|\leq 1 \right\},
\end{equation*}

\begin{equation*}
  \Omega_2=\left\{(\rho,\tau):\frac{1}{2}\leq |\rho|\leq 2, \frac{4}{27}(|\rho|+1)^3-(|\rho|+1)\leq \tau \leq 1 \right\},
\end{equation*}

\begin{equation*}
  \Omega_3=\left\{(\rho,\tau):|\rho|\leq \frac{1}{2}, \tau\leq -1 \right\},
\end{equation*}

\begin{equation*}
  \Omega_4=\left\{(\rho,\tau):|\rho|\geq \frac{1}{2}, \tau \leq -\frac{2}{3}(|\rho|+1) \right\},
\end{equation*}

\begin{equation*}
  \Omega_5=\left\{(\rho,\tau):|\rho|\leq 2, \tau \geq 1 \right\},
\end{equation*}

\begin{equation*}
  \Omega_6=\left\{(\rho,\tau):2 \leq |\rho|\leq 4, \tau \geq \frac{1}{12}(\rho^2+8) \right\},
\end{equation*}

\begin{equation*}
  \Omega_7=\left\{(\rho,\tau):|\rho|\geq 4, \tau \geq \frac{2}{3}(|\rho|-1) \right\},
\end{equation*}

\begin{equation*}
  \Omega_8=\left\{(\rho,\tau):\frac{1}{2}\leq |\rho|\leq 2, -\frac{2}{3}(|\rho|+1)\leq \tau \leq \frac{4}{27}(|\rho|+1)^3-(|\rho|+1) \right\},
\end{equation*}

\begin{equation*}
  \Omega_9=\left\{(\rho,\tau):|\rho|\geq 2, -\frac{2}{3}(|\rho|+1)\leq \tau \leq \frac{2|\rho|(|\rho|+1)}{\rho^2+2|\rho|+4} \right\},
\end{equation*}

\begin{equation*}
  \Omega_{10}=\left\{(\rho,\tau):2 \leq |\rho|\leq 4, \frac{2|\rho|(|\rho|+1)}{\rho^2+2|\rho|+4}\leq\tau \leq \frac{1}{12}(\rho^2+8)\right\},
\end{equation*}

\begin{equation*}
  \Omega_{11}=\left\{(\rho,\tau):|\rho|\geq 4, \frac{2|\rho|(|\rho|+1)}{\rho^2+2|\rho|+4}\leq \tau \leq \frac{2|\rho|(|\rho|-1)}{\rho^2-2|\rho|+4} \right\},
\end{equation*}

\begin{equation*}
  \Omega_{12}=\left\{(\rho,\tau):|\rho|\geq 4, \frac{2|\rho|(|\rho|-1)}{\rho^2-2|\rho|+4}\leq \tau \leq \frac{2}{3}(|\rho|-1) \right\}.
\end{equation*}
\end{lemma}

%--------------------------------------------------------------
By Lemma \ref{Lem-MM}, we derive a condition that is sufficient for functions to be members of the class $\mathcal{S}^*(\alpha_1, \alpha_2)$.
%--------------------------------------------------------------
\begin{lemma}
  If the function $f\in \mathcal{S}$ satisfies the following subordination,
  \begin{equation*}\label{1+zf'' f'}
    1+\frac{zf''(z)}{f'(z)}\prec G(z),
  \end{equation*}
  then $f$ belongs to the class $\mathcal{S}^*(\alpha_1,\alpha_2)$,
  where $G$ is given by \eqref{g}.
\end{lemma}
\begin{proof}
  Let $f\in\mathcal{S}$ and 
  \begin{equation}\label{1+p(z)}
    p(z)=\frac{zf'(z)}{f(z)},\quad(z\in\mathbb{D}).
  \end{equation}
  Then $p$ is analytic and $p(0)=1$. Upon taking the logarithmic derivative of equation \eqref{1+p(z)} with respect to $z$, we obtain:
  \begin{equation*}
    p(z)+\frac{zp'(z)}{p(z)}=1+\frac{zf''(z)}{f'(z)},\quad(z\in\mathbb{D}).
  \end{equation*}
  Since $G$ is convex univalent in $\mathbb{D}$ and ${\rm Re} \{G(z)\}>0$, the conclusion can be derived directly from Lemma \ref{Lem-MM}.
\end{proof}

%-------------------------------------------------------------

%========================================================
\section{Main Results}\label{sec-Main Results}
%========================================================
Determining the sharp coefficient bounds for strongly starlike functions appears to pose a challenging classical problem. To date, the estimation of the coefficients of these functions remains an open problem. There are several known results for specific subclasses of strongly starlike functions, but there is no general solution for the entire class. One of the main challenges in this problem is that the definition of strongly starlike functions is quite broad, and there is a wide variety of functions that fall into this category. This makes it difficult to find a single method that can be used to estimate the coefficients of all strongly starlike functions. Another challenge is that the coefficients of strongly starlike functions can be quite complex, and there is no easy way to calculate them exactly. This means that any estimates that are obtained are likely to be approximate. Despite these challenges, there has been some progress on this problem in recent years. Notably, only the initial four coefficients have known accurate estimates, as documented by Brannan et al. \cite{BCK}, Ali and Singh \cite{AliS}, and Lecko and Sim \cite{LS2017}, see Theorem \ref{thm-u b} below. 
\begin{theorem}\label{thm-u b}
Let the function $f\in\mathcal{A}$ belong to the class $\mathcal{SS}^*(\beta)$, where $\beta\in (0,1]$. Then the following sharp inequalities hold:
\begin{equation*}
    |a_2|\leq 2\beta,\qquad |a_3|\leq \left\{
  \begin{array}{ll}
    \beta, & \hbox{$0<\beta \leq 1/3$;} \\\\
    3\beta^2, & \hbox{$1/3\leq \beta \leq 1$,}
  \end{array}
\right.
\end{equation*}
and
\begin{equation*}
    |a_4|\leq \left\{
  \begin{array}{ll}
    \frac{2\beta}{3}, & \hbox{$0<\beta \leq \sqrt{2/17}$;} \\\\
    \frac{2\beta}{9}(1+17\beta^2), & \hbox{$\sqrt{2/17}\leq \beta \leq 1$.}
  \end{array}
\right.
\end{equation*}
\end{theorem}
To see the sharp upper bound for $|a_5|$, refer to \cite{AliS}. In the case where $\beta\in(0,1]$ is a real number, refer to \cite[Theorem 2]{Kwon}. It is important to note that an error was identified in the final segment of the proof concerning the estimate of $|a_4|$ in \cite{AliS}, and Lecko and Sim subsequently corrected this error; see \cite{LS2017}. These efforts are hoped to eventually lead to a general solution for a sharp estimation of the coefficients of strongly starlike functions.

In this paper, we find a general upper bound for $|a_n|$ of the function $f\in \mathcal{S}^*(\alpha_1,\alpha_2)$ and $f\in \mathcal{SS}^*(\beta)$, where $n=2,3,\ldots$.
Our findings generally do not produce sharp results, except when $n=2$. Of course, in Theorem \ref{Th. a2a3a4} below, we obtain sharp estimates for the first three coefficients of $f\in \mathcal{S}^*(\alpha_1,\alpha_2)$.

Let us start by estimating the coefficients of the function $f\in \mathcal{S}^*(\alpha_1,\alpha_2)$.

%theorem&&&&&&&&&&&&&&&&&&&&&&&&&&&&&&&&&&&&&&&&&&&&&&&&&&&&&&&&&&&&
\begin{theorem}\label{t22}
Let $f$ be of the form \eqref{f} belonging to the class $\mathcal{S}^*(\alpha_1,\alpha_2)$, where $0 < \alpha_1, \alpha_2 \leq 1$.
Then $|a_2|\leq \lambda$ and for $n \geq 3$,
\begin{equation}\label{an}
|a_n| \leq \frac{\lambda}{n-1} \prod_{k=2}^{n-1}\left(1 + \frac{\lambda}{k-1}\right),
\end{equation}
where $\lambda$ is given by \eqref{lambda}. 
The bound is sharp for $n = 2$. 
\end{theorem}
%-------------------------------------------------------------------
\begin{proof}
Consider the function $q$ as follows:
\begin{equation}\label{1t3}
   zf'(z)=q(z)f(z),\quad (z\in\mathbb{D}).
\end{equation}
Then, by Definition \ref{def}, we have
\begin{equation}\label{2t3}
   q(z)\prec G(z),\quad (z\in\mathbb{D}),
\end{equation}
where $G$ is defined by \eqref{g}. If we let
\begin{equation*}\label{3t3}
   q(z)=1+\sum_{n=1}^{\infty} Q_nz^n,
\end{equation*}
then, by Lemma \ref{lem1.3}, we see that the subordination relation
\eqref{2t3} implies that
\begin{equation}\label{4t3}
|Q_n|\leq |A_1|=(\alpha_1+\alpha_2)\cos\left(\frac{\pi\theta}{2}\right)=\lambda,\quad (n=1,2,3,\ldots).
\end{equation}
If we equate the coefficients of $z^n$ on both sides of
\eqref{1t3}, we obtain
\begin{equation*}
   na_n=Q_{n-1} a_1+Q_{n-2}a_2+\cdots+Q_1a_{n-1}+Q_0a_n,\quad
   (n=2,3,\ldots),
\end{equation*}
where $Q_0=a_1=1$. With a simple calculation and also by \eqref{4t3}, we get
\begin{align*}
  |a_n|&=\frac{1}{n-1}\times \left|Q_{n-1}a_1+Q_{n-2}a_2+\cdots+Q_1a_{n-1}\right|\\
  &\leq\frac{\lambda}{n-1}(|a_{1}|+|a_2|+\cdots+|a_{n-1}|)\\
  &=\frac{\lambda}{n-1}\sum_{k=1}^{n-1}|a_k|.
\end{align*}
It is clear that $|a_2|\leq \lambda$. To prove the remaining part of the theorem, we need to show that
\begin{equation}\label{induction}
  \frac{\lambda}{n-1}\sum_{k=1}^{n-1}|a_k|\leq \frac{\lambda}{n-1}
  \prod_{k=2}^{n-1}\left(1+\frac{\lambda}{k-1}\right),
\end{equation}
for $n=3,4,\ldots$. Using induction and simple calculation, we could prove the inequality \eqref{induction}. Hence, the desired estimate for
$|a_n|$, $(n = 3, 4, 5,\ldots)$, follows as asserted in \eqref{an}.

It is easy to see that the extremal function
\begin{equation}\label{f_extremal}
    f(z)
    =
    z \exp\!\left(
    \int_0^z \frac{G(w(\zeta))-1}{\zeta}\, d\zeta
    \right),
\end{equation}
gives $|a_2| = \lambda$ exactly, where $w$ is a Schwarz function.
For $n \geq 3$, the bounds are generally not sharp because of the compounding effect of the subordination chain.
This completes the proof of the theorem.
\end{proof}
Selecting $\alpha_1=\alpha_2=\beta$, in the above Theorem \ref{t22}, we may obtain bounds on the coefficients of a strongly starlike function of order $\beta$. Moreover, they are sharp when $\beta=1$ or $\beta\rightarrow 0+$, for $n=3,4,\ldots$, as shown in Figure \ref{fig:upper bounds for a3} for $n=3$.
\begin{figure}
    \centering
    \includegraphics[width=10cm]{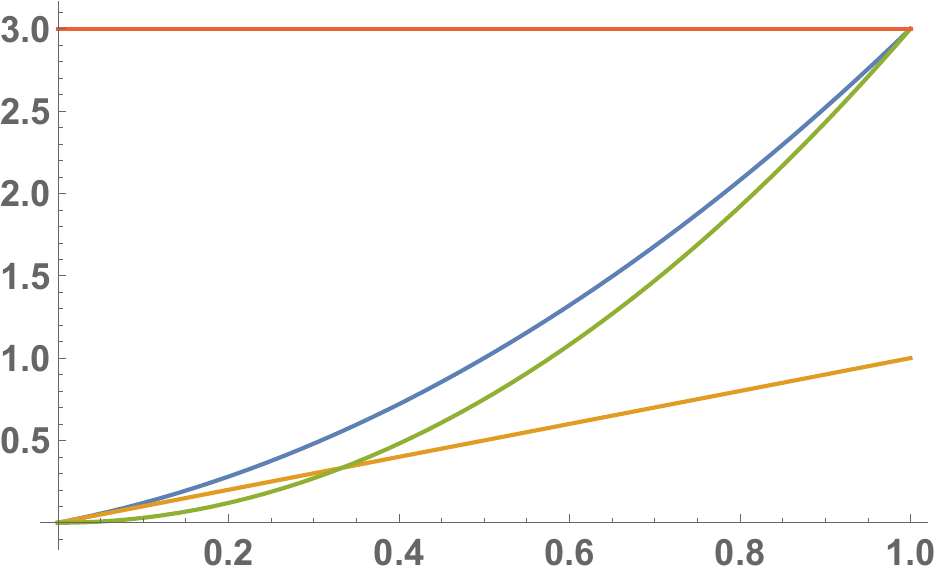}
    \caption{Upper bound for $|a_3|$ by Theorem \ref{thm-upper b for SS} (blue), and by Theorem \ref{thm-u b} (green and orange), where $\beta\in(0,1]$. The red line shows an upper bound for $|a_3|$ of a starlike univalent function.}
    \label{fig:upper bounds for a3}
\end{figure}
%theorem*******************************************

\begin{theorem}\label{thm-upper b for SS}
  If the function $f$ of the form \eqref{f} is a strongly starlike function of order $\beta$, then
  \begin{equation*}
  |a_n|\leq\left\{
  \begin{array}{ll}
    2\beta, & \qquad\hbox{$n=2$;} \\ \\
    \frac{2\beta}{n-1}
   \prod_{k=2}^{n-1}\left(1+\frac{2\beta}{k-1}\right), & \qquad\hbox{$n=3,4,\ldots$,}
\end{array}%
\right.
\end{equation*}
where $0<\beta\leq 1$. The bound is sharp for $n = 2$. 
\end{theorem}

%***********************************

%Figure-----------------------------------
Next, applying the lemma \ref{lem-Ali} and Lemma \ref{Lemma ProSzyn}, we proceed to determine the first three coefficients of the function $f$ represented in the form \eqref{f}, which falls within the class $\mathcal{S}^*(\alpha_1, \alpha_2)$.
%--------------------------------------------------
%--------------------------------------------------
\begin{theorem}\label{Th. a2a3a4}
Consider a function $f\in \mathcal{A}$, which takes the form \eqref{f} and belongs to the class $\mathcal{S}^*(\alpha_1, \alpha_2)$, where $\alpha_1,\alpha_2\in(0,1]$. Let $\lambda$ be defined by \eqref{lambda}, and $\gamma=(3\alpha-1)/2$, where $\alpha$ is given by \eqref{alpha}.
Then
\begin{equation*}
|a_2|\leq \lambda,\qquad |a_3|
\le \left\{
  \begin{array}{ll}
    \frac{\lambda}{2}, & \qquad\hbox{$0<\alpha\le 1/3$;} \\ \\
    \frac{\lambda}{2}\sqrt{1+4\gamma(1+\gamma)\cos^2\!\left(\frac{\pi\theta}{2}\right)}, & \qquad\hbox{$1/3\le \alpha\le 1$,}
\end{array}%
\right.
\end{equation*}
and
\begin{equation*}
    |a_4|\leq
    \frac{\lambda}{3}\, H(\rho,\tau),
\end{equation*}
where $H(\rho,\tau)$ is given by~\eqref{Hpq} with
\begin{equation}\label{eq:rhotau}
\rho = \frac{3}{2}A_1 + \frac{2A_2}{A_1}, \quad \tau = \frac{1}{2}A_1^2 + \frac{3}{2}A_2 + \frac{A_3}{A_1}.
\end{equation}
Each inequality is sharp.
\end{theorem}
%--------------------------------------------------
\begin{proof}
Let $\alpha_1$ and $\alpha_2$ belong to the interval $(0,1]$. If the function $f\in\mathcal{A}$ belongs to the class $\mathcal{S}^*(\alpha_1, \alpha_2)$, then by using the Definition \ref{def}, there exists a Schwarz function $w(z)=w_1 z+w_2 z^2+w_3 z^3+\cdots$ such that
\begin{equation}\label{eq-zf}
    \frac{zf'(z)}{f(z)}=G(w(z)),\quad (z\in\mathbb{D}).
\end{equation}
Upon substituting the Taylor series expansions of both sides of \eqref{eq-zf}, we derive the following:
\begin{align*}
    &1+a_2z+(2a_3-a_2^2)z^2+(3a_4-3a_2a_3+a_2^3)z^3+\cdots\\ &\quad=1+A_1 w_1 z+(A_1w_2+A_2w_1^2)z^2+(A_1w_3+2A_2w_1w_2+A_3w_1^3)z^3+\cdots,
  \end{align*}
where $A_n$, $n=1,2,3$, are defined in \eqref{A123}. Equating the coefficients of the corresponding terms in the last relation leads to
\begin{equation}\label{a2a3}
a_2=A_1w_1,\quad 2a_3=A_1w_2+(A_2+A_1^2)w_1^2,
\end{equation}
  and 
  \begin{equation}\label{a4}
     3 a_4= A_1 \left[w_3+\left(\frac{3}{2}A_1+\frac{2A_2}{A_1}\right)w_1w_2+\left(\frac{1}{2}A_1^2+\frac{3}{2}A_2+\frac{A_3}{A_1}\right)w_1^3\right].
  \end{equation}
It follows from the first equality in \eqref{a2a3} that $|a_2| = |A_1||w_1| \leq |A_1| = \lambda$, which yields the sharp upper bound for $|a_2|$.

The second coefficient relation in \eqref{a2a3} can be rewritten as
\begin{equation*}
2a_3
=
A_1\left(
w_2
+
\left(
\frac{A_2}{A_1}+A_1
\right) w_1^2
\right).
\end{equation*}
An application of Lemma~\ref{lem-Ali} yields
\begin{equation}\label{2|a3|}
2|a_3|
\le
|A_1|\max \left\{1,
\left|
\frac{A_2}{A_1}+A_1
\right|\right\}.
\end{equation}
A direct computation using \eqref{A123} shows that
\begin{equation*}
\frac{A_2}{A_1}+A_1
=
1+\frac{3\alpha-1}{2}(1+c)=
1+\gamma(1+c).
\end{equation*}
Taking moduli gives
\begin{equation*}
\left|
\frac{A_2}{A_1}+A_1
\right|
=
\sqrt{1+4\gamma(1+\gamma)\cos^2\!\left(\frac{\pi\theta}{2}\right)}.
\end{equation*}
It is easy to see that $|A_2/A_1+A_1|\geq 1$ if and only if $\gamma\ge 0$, which implies that $\alpha\ge 1/3$. Therefore, we have
\begin{equation*}
\max\left\{1, \left|
\frac{A_2}{A_1}+A_1
\right|\right\}=\sqrt{1+4\gamma(1+\gamma)\cos^2\!\left(\frac{\pi\theta}{2}\right)}.
\end{equation*}
On the other hand, $|A_2/A_1+A_1|\le  1$ if and only if $\gamma\le  0$, which implies that $\alpha\le  1/3$. Thus, 
\begin{equation*}
\max\left\{1, \left|
\frac{A_2}{A_1}+A_1
\right|\right\}=1,
\end{equation*}
if $0<\alpha\le 1/3$.
Consequently, by \eqref{2|a3|} we obtain the sharp bound for $|a_3|$.

From the coefficient relation \eqref{a4}
\begin{equation*}\label{a4_formula}
    3a_4 = A_1\left[ w_3 + \rho\, w_1w_2 + \tau\, w_1^3 \right],
\end{equation*}
where
\begin{equation*}
\rho = \frac{3}{2}A_1 + \frac{2A_2}{A_1}, \quad \tau = \frac{1}{2}A_1^2 + \frac{3}{2}A_2 + \frac{A_3}{A_1}.
\end{equation*}
An application of Lemma~\ref{Lemma ProSzyn} yields
\begin{equation*}\label{a4_bound_general}
    |a_4| \leq \frac{|A_1|}{3}\, H(\rho,\tau)=\frac{\lambda}{3}\, H(\rho,\tau),
\end{equation*}
where $H(\rho,\tau)$ is given by~\eqref{Hpq}.

The coefficient inequalities in Theorem~\ref{Th. a2a3a4} are attained by different extremal functions. 
More precisely, each bound is realized by choosing an appropriate extremal Schwarz function 
$w$ in the subordination relation
\begin{equation}\label{eq:extremal_general}
    \frac{z f'(z)}{f(z)} = G(w(z)),
\end{equation}
where $G$ is given by~\eqref{g}. 
For any such Schwarz function $w$, the corresponding extremal function is uniquely determined by the extremal function \eqref{f_extremal}.
The sharp bound for $|a_2|$ is attained when $w(z)=e^{i\phi}z$ for some $\phi\in\mathbb{R}$. Therefore, the corresponding extremal function is given by:
\begin{equation*}
f_2(z)
=
z
+ A_1 e^{i\phi} z^2
+ \frac{A_2+A_1^2}{2} e^{2i\phi} z^3
+ \frac{A_3+2A_1A_2+A_1^3}{3} e^{3i\phi} z^4
+ \cdots.
\end{equation*}

The sharpness of the bound for $|a_3|$ depends on the range of the parameter $\alpha$.
If $0<\alpha\le 1/3$, then
\begin{equation*}
\max\left\{1,\left|\frac{A_2}{A_1}+A_1\right|\right\}=1,
\end{equation*}
and equality in Lemma~\ref{lem-Ali} is attained for Schwarz functions of the form
$w(z)=e^{i\psi}z^2$, $\psi\in\mathbb{R}$.  
The corresponding extremal function satisfies
\begin{equation*}
f_{31}(z)
=
z
+
\frac{A_1}{2} e^{i\psi} z^3
+
O(z^5).
\end{equation*}

If $1/3\le \alpha\le 1$, then
\begin{equation*}
\max\left\{1,\left|\frac{A_2}{A_1}+A_1\right|\right\}
=
\left|\frac{A_2}{A_1}+A_1\right|,
\end{equation*}
and equality is attained for Schwarz functions of the form
$w(z)=e^{i\phi}z$, $\phi\in\mathbb{R}$.  
In this case, the extremal function is given by
\begin{equation*}
f_{32}(z)
=
z
+
A_1 e^{i\phi} z^2
+
\frac{A_2+A_1^2}{2} e^{2i\phi} z^3
+
\cdots.
\end{equation*}

Finally, the sharp bound for $|a_4|$ is attained by Schwarz functions realizing equality in the Prokhorov--Szynal lemma, which are finite Blaschke products of degree at most two. In particular, equality is attained for Schwarz functions of the form
\begin{equation*}
w(z)=e^{i\phi}\frac{z-a}{1-\overline{a}z}, \quad |a|\le 1,
\end{equation*}
for which equality holds in the Prokhorov--Szynal lemma.

Consequently, there is no single extremal function that simultaneously attains equality in all three coefficient bounds.
This completes the proof.
\end{proof}

\begin{remark}
We illustrate the bound for $|a_4|$ in Theorem~\ref{Th. a2a3a4} for several special choices of
the parameters $\alpha_1$ and $\alpha_2$.

\medskip
\noindent
\textbf{Case 1.} Let $\alpha_1=\alpha_2=1$.  
Then $\theta=0$, $c=1$, and $\alpha=(\alpha_1+\alpha_2)/2=1$.  
Using \eqref{A123}, we obtain
\begin{equation*}
A_1=2,\qquad A_2=2,\qquad A_3=2.
\end{equation*}
Consequently,
\begin{equation*}
\rho=\frac{3}{2}A_1+\frac{2A_2}{A_1}=5,
\qquad
\tau=\frac{1}{2}A_1^2+\frac{3}{2}A_2+\frac{A_3}{A_1}=6.
\end{equation*}
Since $(\rho,\tau)\in\Omega_7$, Lemma~\ref{Lemma ProSzyn} yields
$H(\rho,\tau)=|\rho|=5$. Therefore,
\begin{equation*}
|a_4|
\le
\frac{\lambda}{3}H(\rho,\tau)
=
\frac{2}{3}\cdot 5
=
\frac{10}{3}.
\end{equation*}

\medskip
\noindent
\textbf{Case 2.} Let $\alpha_1=\alpha_2=\tfrac12$.  
Then $\theta=0$, $c=1$, and $\alpha=\tfrac12$.  
From \eqref{A123}, we find
\begin{equation*}
A_1=1,\qquad A_2=\tfrac12,\qquad A_3=\tfrac12.
\end{equation*}
Hence,
\begin{equation*}
\rho=\frac{3}{2}A_1+\frac{2A_2}{A_1}=\frac{5}{2},
\qquad
\tau=\frac{1}{2}A_1^2+\frac{3}{2}A_2+\frac{A_3}{A_1}=\frac{7}{4}.
\end{equation*}
Since $(\rho,\tau)\in\Omega_6$, Lemma~\ref{Lemma ProSzyn} gives
$H(\rho,\tau)=|\rho|=\tfrac52$. Thus,
\begin{equation*}
|a_4|
\le
\frac{\lambda}{3}H(\rho,\tau)
=
\frac{1}{3}\cdot\frac{5}{2}
=
\frac{5}{6}.
\end{equation*}

\medskip
\noindent
\textbf{Case 3.}
Let $\alpha_1=1$ and $\alpha_2=\tfrac12$.  
Then $\alpha=\tfrac34$ and $\theta=-\tfrac13$, so that
$c=e^{-i\pi/3}$ and $|1+c|=\sqrt{3}$.  
Using \eqref{A123}, we obtain
\begin{equation*}
A_1=\tfrac34(1+c),\quad
A_2=\tfrac34(1+c)-\tfrac{3}{32}(1+c)^2,\quad
A_3=\tfrac34(1+c)-\tfrac{3}{16}(1+c)^2+\tfrac{5}{128}(1+c)^3.
\end{equation*}
A direct computation yields
\begin{equation*}
\rho=\frac{3}{2}A_1+\frac{2A_2}{A_1}\approx 3.79,
\qquad
\tau=\frac{1}{2}A_1^2+\frac{3}{2}A_2+\frac{A_3}{A_1}\approx 3.02.
\end{equation*}
Since $(\rho,\tau)\in\Omega_6$, Lemma~\ref{Lemma ProSzyn} again implies
$H(\rho,\tau)=|\rho|$. Hence,
\begin{equation*}
|a_4|
\le
\frac{\lambda}{3}H(\rho,\tau)
\approx 1.64.
\end{equation*}

\medskip
\noindent
\textbf{Remark on boundary behavior.}
If $\alpha_1+\alpha_2\to 0$, then $A_1\to 0$, and the parameters $\rho$ and $\tau$
are no longer well defined. Therefore, such limiting cases fall outside the
scope of Theorem~\ref{Th. a2a3a4} and are excluded from consideration.

\medskip
In all admissible cases, the bound for $|a_4|$ is sharp in the sense of
Lemma~\ref{Lemma ProSzyn}, and equality is attained by extremal Schwarz
functions given by finite Blaschke products of degree at most two.
\end{remark}

\subsection{Logarithmic Coefficients.}
Logarithmic coefficients, denoted by $\gamma_n$, are extracted from the function $f(z)$ using the following series representation:
\begin{equation}\label{log coef}
  \log\left\{\frac{f(z)}{z}\right\}=\sum_{n=1}^{\infty}2\gamma_n z^n,\quad (z\in \mathbb{D}).
\end{equation}
These coefficients, $\gamma_n$, hold a significant position in various estimations within the realm of univalent function theory. For example, if $f\in \mathcal{S}$, then we have
\begin{equation*}
  \gamma_1=\frac{a_2}{2},\quad{\rm and} \quad \gamma_2=\frac{1}{2}\left(a_3-\frac{a_2^2}{2}\right)
\end{equation*}
and the sharp estimates
\begin{equation*}
  |\gamma_1|\leq1, \quad{\rm and}\quad |\gamma_2|\leq \frac{1}{2}(1+2e^{-2})\approx 0.635\ldots,
\end{equation*}
hold. For $n \geq 3$, estimating $\gamma_n$ is much more difficult, and no significant upper bounds for $|\gamma_n|$ are known when $f \in \mathcal{S}$; the problem remains open for $n \geq 3$.
The sharp upper bounds for the modulus of logarithmic coefficients have been established for only a limited number of function subclasses within $\mathcal{S}$; the class $\mathcal{S}^*$ offers a relatively straightforward proof that $|\gamma_n|\leq 1/n$ for $n\geq1$, with equality observed in the Koebe function. For other subclasses within $\mathcal{S}$, further details can be found in references \cite{kargarJAnal,PWS1,PWS2,thomas}.

In what follows, we focus on estimating the logarithmic coefficients of functions belonging to the class $\mathcal{S}^*(\alpha_1,\alpha_2)$. First, we need the following theorem:
\begin{theorem}\label{t1}
If the function $f\in \mathcal{A}$ belongs to the class $\mathcal S^*(\alpha_1,\alpha_2)$, then
\begin{equation}\label{0t1}
        \log\left\{\frac{f(z)}{z}\right\}\prec\int_0^z \frac{G(t)-1}{t}{\rm d}t,
\end{equation}
where the function $G$ is defined by \eqref{g}. Moreover,
\begin{equation}\label{wide g}
         \widetilde{G}(z):=\int_0^z \frac{G(t)-1}{t}{\rm d}t,\quad(z\in \mathbb{D}),
\end{equation}
is a convex univalent function.
\end{theorem}
%-------------------------------------------------------------
\begin{proof}
The subordination relation \eqref{sub-con-def} implies that
\begin{equation}\label{2t1}
   z\left(\log\left\{\frac{f(z)}{z}\right\}\right)'\prec G(z)-1,
\end{equation}
where $G(z)-1$ is convex univalent. For $x\geq0$
the function
\begin{equation*}
    \tilde{h}(x;z)=\sum\limits_{k=1}^\infty\frac{(1+x)z^k}{k+x},
\end{equation*}
is convex univalent in $\mathbb{D}$ (see \cite{RU2}). %Since, for
%\begin{equation*}
%    \tilde{h}(0;z)=\sum\limits_{k=1}^\infty\frac{z^k}{k},
%\end{equation*}
By \eqref{1t0}, we have:
\begin{equation}\label{3t1}
    \left[g(z)\prec F(z)\right]\Rightarrow\left[g(z)*\tilde{h}(0;z)\prec F(z)*\tilde{h}(0;z)\right],
\end{equation}
whenever $F$ is a convex univalent function. Since
\begin{equation}\label{4t1}
    g(z)*\tilde{h}(0;z)=\int_0^z \frac{g(t)}{t}{\rm d}t,
\end{equation}
therefore, by \eqref{2t1}-\eqref{3t1} and \eqref{4t1}, we get
\begin{equation*}
     \int_0^z \log\left\{f(t)\right\}'{\rm d}t\prec\int_0^z \frac{G(t)-1}{t}{\rm d}t,
\end{equation*}
which gives \eqref{0t1}. Moreover,
\begin{equation*}
    \widetilde{G}(z)=\int_0^z \frac{G(t)-1}{t}{\rm
    d}t=\left\{G(z)-1\right\}*\tilde{h}(0;z).
\end{equation*}
Since the class of convex univalent functions is known to be preserved under convolution, as demonstrated in \cite{RUSS}, we can confidently conclude that the function $\widetilde{G}$ also falls within the class of convex univalent functions. This completes the proof.
\end{proof}
%---------------------------------------------------------------------------
%----------------------------------------------------------------------------
Applying the last theorem, we get the following.
\begin{theorem}\label{t2}
Let $\widetilde{G}$ be of the form \eqref{wide g} and $0<\alpha_1, \alpha_2\leq 1$.
If $f(z)\in\mathcal S^*(\alpha_1,\alpha_2)$, then
\begin{equation*}
r\exp\widetilde{G}(-r)\leq \left|f(z)\right|
\leq r\exp\widetilde{G}(r),
\end{equation*}
for each $r=|z|<1$. Both inequalities are sharp.
\end{theorem}
%----------------------------------------------------------------------------
\begin{theorem}\label{th.logcoef}
Let $f\in \mathcal{S}^*(\alpha_1,\alpha_2)$ and the coefficients of $\log(f(z)/z)$ be given by \eqref{log coef} and $0<\alpha_1, \alpha_2\leq 1$. Then
\begin{equation}\label{gamma_n}
  |\gamma_n|\leq \frac{\lambda}{2n},\quad(n\ge 1),
\end{equation}
where $\lambda$ is given by \eqref{lambda}.
The result is sharp.
\end{theorem}
%--------------------------------------------------------------------
%Proof-----------------------------------
\begin{proof}
  Consider $f\in \mathcal{S}^*(\alpha_1,\alpha_2)$. By the Taylor series of $G$, \eqref{log coef}, and \eqref{2t1}, we obtain
\begin{equation*}
        \sum_{n=1}^{\infty}2n\gamma_n z^n\prec \sum_{n=1}^{\infty}A_n z^n,
\end{equation*}
where $A_n$ is defined in \cite[(2.3)]{AP-2017}.
Utilizing Lemma \ref{lem1.3} yields:
\begin{equation*}
  2n|\gamma_n|\leq | A_1|=\lambda.%,
\end{equation*}
Thus, the desired inequality \eqref{gamma_n} follows.
Equality holds for the logarithmic coefficients of the function
\begin{equation*}
z \mapsto z \exp \widetilde{G}(z),
\end{equation*}
where $\widetilde{G}$ is defined in \eqref{wide g}. This completes the proof.
\end{proof}
%--------------------------------------------------
If we let $\alpha_1=\alpha_2=\beta$ in the above Theorem \ref{th.logcoef}, we get the following result, which was previously obtained by Elhosh, see \cite[Theorem 2]{El}.
\begin{corollary}
Let $f$ be a strongly starlike function of order $\beta$, where $0<\beta\leq 1$. Then the logarithmic coefficients of $f$ satisfy the following sharp inequality:
\begin{equation*}
  |\gamma_n|\leq \frac{1}{n}\beta,\quad(n\ge 1).
\end{equation*}
In particular, taking $\beta=1$ gives us an estimate of the logarithmic coefficients of starlike functions.
\end{corollary}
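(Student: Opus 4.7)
The plan is to obtain the corollary as a direct specialization of Theorem \ref{th.logcoef} by setting $\alpha_1=\alpha_2=\beta$. First I would invoke the containment stated in the introduction, namely $\mathcal{S}^*_t(\beta,\beta)\equiv\mathcal{SS}^*(\beta)$, so that every strongly starlike function $f$ of order $\beta$ lies in $\mathcal{S}^*_t(\alpha_1,\alpha_2)$ with $\alpha_1=\alpha_2=\beta$. This legitimises applying Theorem \ref{th.logcoef} to $f$ with that choice of parameters.

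Next I would substitute $\alpha_1=\alpha_2=\beta$ into the two quantities appearing on the right-hand side of \eqref{gamma_n}. With these values, $\theta=\tfrac{\alpha_2-\alpha_1}{\alpha_2+\alpha_1}=0$, so that $\cos(\theta/2)=1$, while $(\alpha_1+\alpha_2)/2=\beta$. Hence the bound $|\gamma_n|\le\frac{\alpha_1+\alpha_2}{2n}\cos(\theta/2)$ collapses to $|\gamma_n|\le\beta/n$, which is exactly the inequality asserted.

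For sharpness, I would point to the extremal function already identified in the proof of Theorem \ref{th.logcoef}, namely $z\mapsto z\exp\widetilde{G}(z)$ with the same specialization $\alpha_1=\alpha_2=\beta$; with $c=1$ in \eqref{g} this reduces to the familiar extremal $z/(1-z)^{2\beta}$ for the class $\mathcal{SS}^*(\beta)$, whose logarithmic coefficients are exactly $\gamma_n=\beta/n$. Finally I would add the one-line remark that the case $\beta=1$ recovers the classical bound $|\gamma_n|\le1/n$ for starlike functions, since $\mathcal{SS}^*(1)=\mathcal{S}^*$. There is essentially no obstacle here: the corollary is a pure parameter substitution, and the only thing to be careful about is recording the equivalence $\mathcal{S}^*_t(\beta,\beta)=\mathcal{SS}^*(\beta)$ so that Theorem \ref{th.logcoef} is applicable to the given $f$.
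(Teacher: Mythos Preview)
Your proposal is correct and follows exactly the paper's approach: the corollary is obtained by the direct substitution $\alpha_1=\alpha_2=\beta$ (hence $\theta=0$, $\cos(\theta/2)=1$) into Theorem~\ref{th.logcoef}, using the identification $\mathcal{S}^*_t(\beta,\beta)=\mathcal{SS}^*(\beta)$. Your additional remarks on the extremal function and the case $\beta=1$ are also in line with the paper.
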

%--------------------------------------------------
%----------------------------------------------------------------------

%Theorem***************************************************************
\subsection{Upper and Lower Bounds for \texorpdfstring{${\rm Re}\{zf'(z)/f(z)\}$, where \texorpdfstring$f\in S^*(\alpha_1,\alpha_2)$}{K}} We finish this paper by estimating ${\rm Re}\{zf'(z)/f(z)\}$, where $f\in S^*(\alpha_1,\alpha_2)$.

\begin{theorem}\label{t21}
Suppose that $f$ belongs to the class $\mathcal{A}$. If $f\in \mathcal{S}^*(\alpha_1,\alpha_2)$, then
\begin{equation*}\label{lower_bound}
    \mathrm{Re}\left\{\frac{zf'(z)}{f(z)}\right\}
    \geq
    \left(\frac{1-\left(1+2\cos(\pi \theta/2)\right)r}{1-r}\right)^{(\alpha_1+\alpha_2)/2}, \quad 0\leq|z|=r\leq\frac{1}{1+2\cos(\pi \theta/2)},
\end{equation*}
and
\begin{equation*}\label{upper_bound}
    \mathrm{Re}\left\{\frac{zf'(z)}{f(z)}\right\}
    \leq
    \left(\frac{1+\left(2\cos(\pi \theta/2)-1\right)r}{1-r}\right)^{(\alpha_1+\alpha_2)/2}, \quad
    0\leq|z|=r<1,
\end{equation*}
where $0<\alpha_1,\alpha_2\leq 1$ and $\theta=(\alpha_2-\alpha_1)/(\alpha_2+\alpha_1)$. 
Moreover, $\mathrm{Re}\{zf'(z)/f(z)\} > 0$ for all $z\in\mathbb{D}\setminus\{0\}$.
\end{theorem}
%------------------------------------------------------
\begin{proof}
Let the function $f\in\mathcal{A}$ be in the class $\mathcal
S^*(\alpha_1,\alpha_2)$. Then, by the definition of subordination, there exists a Schwarz function $w(z)$,
satisfying the following conditions:
\begin{equation*}
    w(0)=0, \quad {\rm and}\quad |w(z)|<1, \quad (z\in\mathbb{D})
\end{equation*}
and such that
\begin{equation*}
    \frac{zf'(z)}{f(z)}=\left(\frac{1+cw(z)}{1-w(z)}\right)^{(\alpha_1+\alpha_2)/2},\quad
(z\in\mathbb{D}).
\end{equation*}
Define,
\begin{equation}\label{F}
    F(z):=\frac{1+cw(z)}{1-w(z)},\quad (z\in\mathbb{D}).
\end{equation}
%It is clear that ${\rm Re}\{F(z)\}>0 $ in the unit disk. We
%shall describe ${\rm Re}\{F(z)\}$ more precisely. 
From \eqref{F} we have
\begin{equation*}
  \left|F(z)-1\right|=\left|\frac{(1+c)w(z)}{1-w(z)}\right|\leq \frac{2|w(z)|\cos(\pi \theta/2)}{1-|w(z)|}\leq \frac{2r\cos(\pi \theta/2)}{1-r},\quad
    (|z|=r<1).
\end{equation*}
This inequality implies that
$$\frac{1-(1+2\cos(\pi\theta/2))r}{1-r} \leq \mathrm{Re}\,\{F(r)\} \leq \frac{1+(2\cos(\pi\theta/2)-1)r}{1-r}.$$ For functions with positive real part, raising to the power $(\alpha_1+\alpha_2)/2$
 preserves these inequalities, giving the stated bounds. Of course, the constraint $$r \leq \frac{1}{1+2\cos(\pi\theta/2)}$$ ensures the lower bound remains non-negative.
This concludes the proof of Theorem \ref{t21}.
\end{proof}
%--------------------------------------------------------------
By substituting $\alpha_1=\alpha_2=\beta$ into Theorem \ref{t21}, we obtain:
%corollary**************************************************************
\begin{corollary}
Let $f$ be a strongly starlike function of order $\beta$, where $0<\beta\leq 1$. Then
\begin{equation*}
\left(\frac{1-3r}{1-r}\right)^\beta\leq{\rm Re}\left\{\frac{zf'(z)}{f(z)}\right\}\leq
\left(\frac{1+r}{1-r}\right)^\beta,\quad (|z|=r\leq1/3).
\end{equation*}
In particular, if $f$ is a strongly starlike function of order $\beta$, and $|z|=1/3$, then
 \begin{equation*}
0\leq {\rm Re}\left\{\frac{zf'(z)}{f(z)}\right\}\leq 2^\beta.
\end{equation*}
\end{corollary}

\vspace{1cm}
\noindent
\textbf{Author Contributions} R.K., J.S., and H.M. conducted the actual study. R.K., J.S., and H.M. drafted and finalized the manuscript. R.K. prepared the figure for the paper. All authors reviewed the manuscript.\\\\
\noindent
\textbf{Data Availability} Throughout this study, no datasets were generated or analyzed.\\\\
\noindent
\textbf{Competing Interests} The authors declare that they have no conflicting interests.

%======================================================================

%-------------------------------------------------------------

\end{document}